\newtheorem{theorem}{Theorem}[section]
\newtheorem{lemma}[theorem]{Lemma}
\newtheorem{proposition}[theorem]{Proposition}
\newtheorem{remark}[theorem]{Remark}
\newcommand\Alg{\mathop{\rm Alg}}
\newcommand\Lat{\mathop{\rm Lat}}
\newcommand\ball{\mathop{\rm Ball}}
\newcommand\Conv  {\mathop{\rm Conv}}
\newcommand{\cl}[1]{\mathcal{#1}}
\newcommand{\bb}[1]{\mathbb{#1}}
\begin{document}
\title{Norms of vector functionals}
\date{31 May 2018}

\author[M. Anoussis]{M. Anoussis}
\address{Department of Mathematics,
  University of the Aegean, 83200 Samos, Greece}
\email{mano@aegean.gr}

\author[N. Ozawa]{N. Ozawa}
\address{
Research Institute for Mathematical Sciences, Kyoto University, Kyoto 606-8502, Japan}
\email{narutaka@kurims.kyoto-u.ac.jp}

\author[I. Todorov]{I. G. Todorov}
\address{Mathematical Sciences Research Centre,
  Queen's University Belfast, Belfast BT7 1NN, United Kingdom, and 
  School of Mathematical Sciences, Nankai University,
  94 Weijin Road, Tianjin, 300071, P.R. China}
\email{i.todorov@qub.ac.uk}

\subjclass[2010]{Primary: 	47L05; Secondary: 46L10, 47L35} 
\keywords{vector functional, von Neumann algebra, CSL algebra}

\begin{abstract}
We examine the question of when, and how, the norm of a vector functional on an operator algebra
can be controlled by the invariant subspace lattice of the algebra. We introduce a related
operator algebraic property, and show that it is satisfied 
by all von Neumann algebras and by all CSL algebras.
We exhibit examples of operator algebras that do not satisfy the 
property or any scaled version of it. 
\end{abstract}

\maketitle

\section{Introduction and preliminaries}\label{s_intro}

Vector functionals have played a fundamental role in the theory of operator algebras 
since its inception. In the area of selfadjoint algebras, they arise naturally as 
pure states and through the GNS construction \cite{kr}. In the realm of non-selfadjoint operator algebras, 
they are at the base of the notions of reflexivity \cite{ls} and hyperreflexivity \cite{a10}. 
In this note, we study the question of whether the norm of a vector functional on a given operator 
algebra can be controlled through the invariant subspace lattice of the algebra. 
We propose a new bound for the norms of vector functionals
on certain classes of operator algebras,  establishing
minimax inequalities that capture the duality between the algebras and their subspace lattices. 

In order to describe our results in more detail, we introduce some notation. 
Let $\cl B(H)$ be the collection of all bounded linear operators acting on a Hilbert space $H$ and 
$\cl P(H)$ be the set of all projections, that is, self-adjoint idempotents, on $H$.
If $\cl A\subseteq \cl B(H)$ is a unital operator algebra,
let 
\begin{equation}\label{eq_lat}
\Lat\cl A = \{L\in \cl B(H) : \mbox{ projection}, (I-L)\cl A L = \{0\}\}
\end{equation}
be the invariant subspace lattice of $\cl A$. 
Given vectors $x,y\in H$, let $\omega_{x,y}$ be the vector functional on $\cl B(H)$ defined by
$\omega_{x,y}(T) = (Tx,y)$, and let $\omega_{x,y}|_{\cl A}$ be the restriction of $\omega_{x,y}$ to $\cl A$.

We are interested in the question for which operator algebras $\cl A$ the inequality
\begin{equation}\label{eq_el}
\inf\mbox{}_{L\in \Lat\cl A} \left(\|(I-L)x\|^2 + \|Ly\|^2\right) \leq \|\omega_{x,y}|_{\cl A}\|
\end{equation}
holds for all vectors $x,y\in H$. 
If (\ref{eq_el}) is satisfied, we say that $\cl A$ possesses \emph{property (V)}. 
We prove that all von Neumann algebras and all CSL algebras
possess property (V). 
We introduce a scaled version of the property, in which (\ref{eq_el}) holds up to 
a constant, and show that the algebra of operators leaving two non-trivial 
closed subspaces in generic position invariant satisfies it if and only if the angle between the subspaces is positive. 
In particular, this implies that if the angle between the subspaces is zero then 
the corresponding algebra does not possess (V).

\smallskip

In the rest of this section, we fix notation and recall some notions needed in the sequel. 
We fix throughout a Hilbert space $H$. 
For a subset $\cl S\subseteq \cl B(H)$, we write ${\rm Ball}(\cl S)$ 
for the set of all contractions in $\cl S$. 
We recall the weak operator topology on $\cl B(H)$, denoted here by $w$, 
in which a net $(A_i)_i$ converges to an operator $A$ if and only if 
$\omega_{x,y}(A_i)\to \omega_{x,y}(A)$ for all vectors $x,y\in H$, and
the strong operator topology, denoted by $s$, in which 
$(A_i)_i$ converges to an operator $A$ if and only if 
$A_i x\to Ax$ for every vector $x\in H$.
We note that ${\rm Ball}(\cl B(H))$ is $w$-compact.

If $L$ is a projection, we let 
as usual $L^{\perp} = I - L$.
The natural order in $\cl P(H)$ is the order of a (complete) lattice; by a 
\emph{subspace lattice} we will mean a sublattice of $\cl P(H)$ that is 
closed in the strong operator topology. 
It is easily verified that, if $\cl A\subseteq \cl B(H)$ is a
unital (and not necessarily selfadjoint) operator algebra then the set 
$\Lat\cl A$ defined in (\ref{eq_lat}) is a subspace lattice. 
A \emph{commutative subspace lattice (CSL)} is a subspace lattice 
$\cl L$ such that $PQ = QP$ for all $P,Q\in \cl L$. A \emph{CSL algebra} is an  
algebra of the form $\Alg\cl L$ for some CSL $\cl L$, where 
$$\Alg\cl L = \{A\in \cl B(H) : L^{\perp}A L = 0, \mbox{ for all } L\in \cl L\}$$
is the algebra of all operators in $\cl B(H)$ leaving the ranges of projections in $\cl L$ invariant.
We refer the reader to \cite{a} and \cite{dav-book} for a background on CSL's and CSL algebras.

We finish this section with a reformulation of (\ref{eq_el}). Given a subspace lattice $\cl L$ on $H$, let
$$\cl E_{\cl L} = \{(x,y) \in H\times H : \mbox{ there exists } \ L\in \cl L \mbox{ with } Lx = x, Ly = 0\}.$$
The left hand side of (\ref{eq_el}) is equal to
$d((x,y),\cl E_{\cl L})^2$, where $d$ denotes the distance in the Hilbert space $H\oplus H$.
Thus, inequality (\ref{eq_el}) becomes
$$d\left((x,y),\cl E_{\cl L}\right)^2\leq \|\omega_{x,y}|_{\cl A}\|.$$

\section{Validity of property (V)}\label{s_v}

If $\cl L$ is a subspace lattice acting on a Hilbert space $H$, we denote by
$\Conv \cl L$ the $w$-closure of the convex hull of $\cl L$; note that 
$\Conv \cl L$ is a ($w$-closed) convex subset of $\cl B(H)$.

\begin{lemma}\label{l_conhulls}
Let $H$ be a Hilbert space, $x,y \in H$, $\cl A\subseteq \cl B(H)$ be a unital operator algebra
and $\cl L = \Lat\cl A$. Then inequality (\ref{eq_el}) is equivalent to 
\begin{equation}\label{eq_elch}
\inf\mbox{}_{A\in \Conv \cl L} \left(((I-A)x,x) + (Ay,y)\right) \leq \|\omega_{x,y}|_{\cl A}\|.
\end{equation}
\end{lemma}
\begin{proof}
Consider the function $f : \Conv\cl L\longrightarrow \bb{R}^+$
given by 
$$f(X) = ((I-X)x,x) + (Xy,y), \ \ \ X\in \Conv\cl L;$$ 
clearly, $f$ is continuous in the weak operator topology.
Moreover, $f$ is affine in the sense that if $X,Y\in\Conv\cl L$ and
$s$ and $t$ are non-negative numbers with $s + t = 1$, then
$f(s X + t Y) = s f(X) + t f(Y)$.
By Bauer's Maximum Principle 
(see \cite[7.69]{ab}), there exists an extreme point
$B$ of $\Conv \cl L$ such that $\min_{X\in \Conv \cl L} f(X) = f(B)$.
By the converse of the Krein-Milman Theorem, $B$ belongs to the weak closure $\overline{\cl L}^{w}$
of $\cl L$. Thus, there exists a net $(P_{\nu})_{\nu}\subseteq  \cl L$ such that
$P_{\nu}\rightarrow_{\nu} B$ weakly; hence
$f(P_{\nu})\rightarrow_{\nu} f(B)$.
It follows that
$$\inf\mbox{}_{L\in \Lat\cl A} \left(\|(I-L)x\|^2 + \|Ly\|^2\right)
\leq \lim\mbox{}_{\nu} \|P_{\nu}^{\perp} x\|^2 + \|P_\nu y\|^2  = f(B),$$
that is, 
$$\inf\mbox{}_{A\in \Conv \cl L} \left(((I-A)x,x) + (Ay,y)\right) \geq \inf\mbox{}_{L\in \Lat\cl A} 
\left(\|(I-L)x\|^2 + \|Ly\|^2\right).$$
Since the reverse inequality is trivial, the claim follows. 
\end{proof}

\noindent {\bf Remark. }
It follows from the proof of Lemma \ref{l_conhulls} that the infimum 
on the left hand side of (\ref{eq_elch}) is attained. 

\medskip

Let $\cl A$ be a von Neumann algebra acting on a Hilbert space $H$, and denote by $\frak{P}$ the 
set of all positive contractions in $\cl A'$. 
By Lemma \ref{l_conhulls}, property (V) is in this case equivalent to the validity of the inequality
\begin{equation}\label{eq_vna}
\inf\mbox{}_{B\in \frak{P}} \left((Bx,x) + ((I-B)y,y)\right) \leq \|\omega_{x,y}|_{\cl A}\|, 
\end{equation}
for all $x,y\in H$.

\begin{lemma}\label{l_reduc}
Let $\cl A\subseteq \cl B(H)$ be a von Neumann algebra.
Let $(x,y)$ and $(\xi,\eta)$ be pairs of vectors in $H$,
and $V\in \cl A'$ be a partial isometry such that $Vx = \xi$, $V^*Vx = x$ and $Vy = \eta$. 
If (\ref{eq_vna}) holds for the pair $(\xi,\eta)$ in the place of $(x,y)$, 
then it also holds for the pair $(x,y)$.
\end{lemma}

\begin{proof}
Set $P = V^*V$. By assumption, there exists $B\in \frak{P}$ such that 
$$\|\omega_{\xi,\eta}|_{\cl A}\| \geq (B\xi,\xi) + ((I-B)\eta,\eta).$$
Let $\tilde{B} = V^*BV + P^{\perp}$; then $\tilde{B}\in \frak{P}$. 
Moreover,
\begin{eqnarray*}
\|\omega_{x,y}|_{\cl A}\| 
& = & 
\sup \left\{(Ax,y) : A\in {\rm Ball}(\cl A)\right\}\\
& = & \sup \left\{(AV^*Vx,y) : A\in {\rm Ball}(\cl A)\right\}\\
& = & 
\sup \left\{(V^*AVx,y) : A\in {\rm Ball}(\cl A)\right\}
= \|\omega_{\xi,\eta}|_{\cl A}\| \\ 
& \geq & (B\xi,\xi) + ((I-B)\eta,\eta)
 = 
(BVx,Vx) + ((I-B)Vy,Vy)\\
& = & 
(V^*BVx,x) + ((P - V^*BV)y,y)\\
& = & 
(\tilde{B}x,x) + ((I - \tilde{B})y,y).
\end{eqnarray*}
\end{proof}

\begin{lemma}\label{l_induc}
Let $\cl A\subseteq \cl B(H)$ be a von Neumann algebra with property (V) and $E$ be a projection in $\cl A'$. 
Then $\cl A|_{EH}$ possesses property (V). 
\end{lemma}

\begin{proof}
Let $x,y \in EH$.
By the assumption, Lemma \ref{l_conhulls} and the subsequent Remark, 
there exists $A\in \frak{P}$ such that
$$(Ax,x) + ((I-A)y,y) \le \left\|\omega_{x,y}|_{\cl A}\right\|.$$
Similarly to the proof of Lemma \ref{l_reduc}, set $B = E^{\perp} + EAE$; then $B\in \frak{P}$ and 
\begin{eqnarray*}
\left\|\omega_{x,y}|_{\cl A E}\right\| & = & 
\left\|\omega_{x,y}|_{\cl A}\right\| \geq (Ax,x) + ((I-A)y,y)\\
& = & 
(EAEx,x) + ((E - EAE)y,y)\\
& = & 
(Bx,x) + ((I-B)y,y).
\end{eqnarray*}
\end{proof}

We will make  use of the following standard fact. 

\begin{lemma}\label{l_pi}
Let $\cl N\subseteq \cl B(H)$ be a von Neumann algebra and $\xi,\eta\in H$ be vectors such that 
$\omega_{\xi,\xi}|_{\cl N} = \omega_{\eta,\eta}|_{\cl N}$. Then there exists a partial isometry $V\in \cl N'$ such that 
$V\xi = \eta$ and $V^*V \xi = \xi$.  
\end{lemma}

\begin{theorem}\label{th_vncase}
Every von Neumann algebra possesses property (V).
\end{theorem}
\begin{proof}
The proof is split into several steps. 

\medskip

\noindent {\it Step 1. } 
Assume that $\cl A\subseteq \cl B(H)$ is in standard form, and denote by $\cl P$ the 
corresponding positive cone of vectors in $H$ (see \cite[Chapter IX]{tII} for details regarding 
the standard form of a von Neumann algebra). 
Suppose, in addition, that $x,y\in \cl P$. 
By \cite[Theorem IX.1.2]{tII}, 
\begin{equation}\label{eq_ars1} 
\|x - y\|^2 \leq \|(\omega_{x} - \omega_{y})|_{\cl A'}\|.
\end{equation}
Write 
$(\omega_x-\omega_y)_+$ (resp. $(\omega_x-\omega_y)_-$) for the 
positive (resp. negative) part of $(\omega_x-\omega_y)|_{\cl A'}$. 
Let $P$ be the projection onto the support of $(\omega_x-\omega_y)_-$; then $P\in \cl A'$. 
Taking into account that $x,y$ belong to $\cl P$ and using inequality (\ref{eq_ars1}), we have
\begin{eqnarray*}
2(x,y) 
& \geq & 
\omega_x(I) + \omega_y(I)  - ((\omega_x - \omega_y)(P^{\perp}) - (\omega_x - \omega_y)(P))\\
& = & 
2(\omega_x(P) + \omega_y(P^{\perp})),
\end{eqnarray*}
which implies (\ref{eq_vna}).

\medskip

\noindent {\it Step 2. } 
We still assume that $\cl A\subseteq \cl B(H)$ is in standard form, and let $x,y\in H$ be arbitrary. 
There exist vectors $|x|$ and $|y|$ in $\cl P$ such that 
$\omega_x|_{\cl A'} = \omega_{|x|}|_{\cl A'}$ and $\omega_y|_{\cl A'} = \omega_{|y|}|_{\cl A'}$
(see \cite[Theorem IX.1.2 (iv)]{tII}). 
By Lemma \ref{l_pi}, there exist partial isometries $U,V\in \cl A$ such that 
$U|x| = x$ and $V|y| = y$; note that, in addition, $U^*x = |x|$ and $V^*y = |y|$. 
Thus, 
\begin{eqnarray*}
\|\omega_{x,y}|_{\cl A}\| & = & 
\sup \{|(Ax,y)| : A\in {\rm Ball}(\cl A)\}\\
& = & 
\sup \{|(V^*AU|x|,|y|)| : A\in {\rm Ball}(\cl A)\} 
\leq \|\omega_{|x|,|y|}|_{\cl A}\|.
\end{eqnarray*}
By symmetry, $\|\omega_{x,y}|_{\cl A}\| = \|\omega_{|x|,|y|}|_{\cl A}\|$. 

Similarly, for every projection $Q\in \cl A'$, we have 
$$\|Qx\| = \|QU|x|\| = \|UQ|x|\| \leq \|Q|x|\|.$$
By symmetry, $\|Qx\| = \|Q|x|\|$ and $\|Qy\| = \|Q|y|\|$. Thus, (\ref{eq_vna}) follows from Step 1. 

\medskip

\noindent {\it Step 3. } 
Assume that $\cl M\subseteq \cl B(H)$ is a von Neumann algebra in standard form, 
$K$ is a Hilbert space, 
$\cl A = \cl M \otimes 1_K$, 
$\xi\in H$, $z\in K$ with $\|z\| = 1$, $x = \xi \otimes z$ and $y\in H\otimes K$. 
Write $Q$ for the projection onto $\bb{C}z$. 
Let $\eta\in H$ and $y_0$ be such that $(I\otimes Q)y_0 = 0$ and $y = \eta \otimes z + y_0$. 
By Step 2, there exists a projection $P \in \cl M'$ such that 
\begin{equation}\label{eq_forM}
\|P\xi\|^2 + \|P^{\perp}\eta\|^2 \leq \|\omega_{\xi,\eta}|_{\cl M}\|.
\end{equation}
Set $L = P\otimes Q + I \otimes Q^\perp$; clearly, $L$ is a projection in $\cl A'$.
Moreover, 
$\omega_{x,y}|_{\cl A} = \omega_{\xi,\eta}|_{\cl M}$,
$\|Lx\| = \|P\xi\|$ and $\|L^{\perp}y\| = \|P^{\perp}\eta\|$.
Thus, (\ref{eq_forM}) implies 
$\|Lx\|^2 + \|L^{\perp}y\|^2 \leq \|\omega_{x,y}|_{\cl A}\|$.

\medskip

\noindent {\it Step 4. } 
Assume that $\cl M\subseteq \cl B(H)$ is a von Neumann algebra in standard form, 
$K$ is a Hilbert space, 
$\cl A = \cl M \otimes 1_K$ and $x,y\in H\otimes K$. 
Since $\cl M$ is in standard form, there exists a vector $\xi\in H$ such that 
$$\omega_x(A\otimes I) = \omega_{\xi}(A), \ \ \ \ A\in \cl M.$$
Let $z\in K$ be any unit vector. We thus have 
$$\omega_x(A\otimes I) = \omega_{\xi\otimes z}(A\otimes I), \ \ \ \ A\in \cl M.$$
By Lemma \ref{l_pi}, there exists a partial isometry 
$V\in \cl A'$ such that 
$Vx = \xi\otimes z$ and $V^*V x = x$. 
By Lemma \ref{l_reduc} and Step 3, (\ref{eq_vna}) holds for the pair $(x,y)$. 

\medskip

\noindent {\it Step 5. } Let $\cl A$ be arbitrary, and 
let $\cl M\subseteq \cl B(H)$ be its standard form. 
By \cite[Theorem IV.5.5]{tII}, $\cl A$ is unitarily equivalent to the algebra 
$E(\cl M\otimes 1_K)E$, for some projection $E\in (\cl M\otimes 1_K)'$. 
It follows from Step 4 and Lemma \ref{l_induc} that (V) holds for $\cl A$. 
\end{proof}

We next turn our attention to CSL algebras.

\begin{theorem}\label{th_arv}
Every CSL algebra possesses property (V).
\end{theorem}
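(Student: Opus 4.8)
The plan is to reduce the general case to that of \emph{finite} CSLs by an approximation argument built on Lemmas \ref{l_limd} and \ref{l_omegas}, and then to settle the finite case by an explicit construction. First I would observe that if $\cl A=\Alg\cl L$ for a CSL $\cl L$, then $\cl L\subseteq\Lat\cl A$, so the infimum in (\ref{eq_el}) over $\Lat\cl A$ is at most the infimum over $\cl L$; by the reformulation at the end of Section \ref{s_intro} it therefore suffices to prove
$$d((x,y),\cl E_{\cl L})^2\leq \|\omega_{x,y}|_{\Alg\cl L}\|, \qquad x,y\in H.$$
To pass to finite lattices, let $\Lambda$ be the set of finite subsets $F\subseteq\cl L$ containing $0$ and $I$, directed by inclusion, and let $\cl L_F$ be the sublattice of $\cl P(H)$ generated by $F$. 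Since $\cl L$ is commutative, the lattice operations are $P\wedge Q=PQ$ and $P\vee Q=P+Q-PQ$, so each $\cl L_F$ is a \emph{finite} CSL with $\cl L_F\subseteq\cl L$.

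Granting property (V) for every $\cl L_F$, I would chain the two lemmas. The inclusion $w\mbox{-}\limsup_F\cl L_F\subseteq\Conv\cl L$ is immediate, since $\cl L_F\subseteq\cl L\subseteq\Conv\cl L$ and $\Conv\cl L$ is $w$-closed. For $w\mbox{-}\limsup_F\Alg\cl L_F\subseteq\Alg\cl L$, take a norm-bounded net $T_F\in\Alg\cl L_F$ with $T_F\to T$ weakly and fix $L\in\cl L$; since $L\in\cl L_F$ eventually, one has $L^{\perp}T_FL=0$ eventually, and passing to the weak limit gives $L^{\perp}TL=0$, so $T\in\Alg\cl L$. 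Then Lemma \ref{l_limd} and the monotonicity of $t\mapsto t^2$ on $[0,\infty)$ give $d((x,y),\cl E_{\cl L})^2\leq\liminf_F d((x,y),\cl E_{\cl L_F})^2$, while Lemma \ref{l_omegas} gives $\limsup_F\|\omega_{x,y}|_{\Alg\cl L_F}\|\leq\|\omega_{x,y}|_{\Alg\cl L}\|$; combining these with the finite-case inequalities $d((x,y),\cl E_{\cl L_F})^2\leq\|\omega_{x,y}|_{\Alg\cl L_F}\|$ yields the claim for $\cl L$.

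It remains to treat a finite CSL, where the infimum is a minimum $\delta=\min_{L\in\cl L}(\|L^{\perp}x\|^2+\|Ly\|^2)$. Decompose $H$ into the finitely many atoms $H_1,\dots,H_m$ of the finite-dimensional abelian von Neumann algebra generated by $\cl L$, with projections $E_1,\dots,E_m$; each $L\in\cl L$ is a sum of atoms, and $T=(T_{ij})$ lies in $\Alg\cl L$ exactly when $T_{ij}=0$ unless $i\preceq j$, where $i\preceq j$ means that every $L\in\cl L$ with $H_j\subseteq\operatorname{ran}L$ also satisfies $H_i\subseteq\operatorname{ran}L$. Writing $x_j=E_jx$, $y_i=E_iy$, I would read $\delta$ as the minimum cut of the capacitated network with a source feeding each atom $j$ through an edge of capacity $\|x_j\|^2$, each atom $i$ draining to a sink through an edge of capacity $\|y_i\|^2$, and an infinite-capacity edge $j\to i$ whenever $i\preceq j$. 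The point at which closure of $\cl L$ under both meet and join is used is that any finite cut with source side $A$ dominates the cut associated to $L=\bigvee_{j\in A}L_j\in\cl L$, where $L_j$ is the smallest element of $\cl L$ containing $H_j$; hence the minimum cut equals $\delta$. By the max-flow--min-cut theorem there is a flow $(f_{ij})_{i\preceq j}$ of value $\delta$ with $\sum_i f_{ij}\leq\|x_j\|^2$ and $\sum_j f_{ij}\leq\|y_i\|^2$. Setting $\hat x_j=x_j/\|x_j\|$, $\hat y_i=y_i/\|y_i\|$, $c_{ij}=f_{ij}/(\|x_j\|\|y_i\|)$ (only atoms with $x_j\neq0$ and $y_i\neq0$ carry flow), and $T=\sum_{i\preceq j}c_{ij}\,(\hat y_i\otimes\hat x_j^*)$, where $\hat y_i\otimes\hat x_j^*$ is the rank-one operator $v\mapsto(v,\hat x_j)\hat y_i$, I get $T\in\Alg\cl L$ (each summand lies in an allowed block) and $(Tx,y)=\sum_{i\preceq j}f_{ij}=\delta$.

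The main obstacle is to verify that this $T$ is a contraction, which is exactly what turns $\delta$ into a genuine lower bound for $\|\omega_{x,y}|_{\Alg\cl L}\|$. Since the $\hat x_j$ and the $\hat y_i$ form orthonormal systems, $\|T\|$ is dominated by the operator norm of the matrix $(c_{ij})$, and I would bound the latter by the Schur test with weights $\|x_j\|$ and $\|y_i\|$: the two capacity constraints translate into $\sum_j c_{ij}\|x_j\|\leq\|y_i\|$ and $\sum_i c_{ij}\|y_i\|\leq\|x_j\|$, which are precisely the two Schur inequalities with constant $1$, so $\|T\|\leq1$. Therefore $\|\omega_{x,y}|_{\Alg\cl L}\|\geq(Tx,y)=\delta$, completing the finite case and, through the reduction above, the theorem. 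The delicate parts I expect are the clean identification of the minimum cut with $\delta$ (requiring both lattice operations) and the choice of weights making the Schur test close with constant exactly $1$.
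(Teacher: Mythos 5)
Your proof is correct, and its core --- the finite-lattice case --- takes a genuinely different route from the paper's. The reduction to finite CSLs is the same in spirit (both chain Lemmas \ref{l_limd} and \ref{l_omegas}), though your net of \emph{all} finite sublattices $\cl L_F$ is slightly more robust than the paper's device of an increasing sequence of finite CSLs with strongly dense union: that density is what the paper needs to get $\bigcap_n \Alg\cl L_n = \Alg\cl L$, and producing such a sequence tacitly presupposes separability, whereas your ``eventually $L\in\cl L_F$'' argument verifies the hypothesis of Lemma \ref{l_omegas} directly on any Hilbert space. In the finite case the paper proceeds quite differently: it picks $P\in\cl L$ attaining the minimum, splits $H=PH\oplus P^{\perp}H$, uses minimality of $P$ (and closure of $\cl L$ under meets and joins) to derive the inequalities $\|L^{\perp}y_1\|\leq\|L^{\perp}x_1\|$ and $\|Lx_2\|\leq\|Ly_2\|$ on the two summands, and then invokes Hopenwasser's theorem \cite{hop} twice to obtain contractions $T_1,T_2$ with $T_1x_1=y_1$ and $T_2y_2=x_2$, so that $T=T_1\oplus T_2^*\in\ball(\cl A)$ does the job. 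You instead identify the lattice minimum with a minimum cut on the atoms, apply max-flow--min-cut, and convert the flow into an explicit contraction via the Schur test; the key verifications (the block description of $\Alg\cl L$ via $i\preceq j$, the cut-to-lattice passage through $L=\bigvee_{j\in A}L_j$ with $L_j$ the smallest lattice element above the atom, and the capacity constraints being exactly the two Schur inequalities with constant $1$) all check out. The trade-off: the paper's argument is shorter given \cite{hop}, while yours is elementary and self-contained and, notably, does not use \cite{hop} at all. That is a real gain: the Remark at the end of Section \ref{s_v} points out that property (V) cannot be used to rederive Hopenwasser's criterion precisely because the paper's proof of Theorem \ref{th_arv} relies on it; your argument removes this circularity, so combined with that Remark it furnishes an independent, combinatorial proof of the $Tx=y$ theorem for CSL algebras. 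It also realizes quite literally the minimax (cut/flow) duality advertised in the paper's introduction.
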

\begin{proof}
Let $\cl L$ be a CSL on $H$ and let $\cl A = \Alg \cl L$. 
Fix $x,y\in H$. 
We first prove the statement in the case $\cl L$ is finite.
Let $P\in\cl L$ be such that
\begin{equation}\label{eq_max}
\|P^{\perp} x\|^2 + \|Py\|^2 \leq \|L^{\perp} x\|^2 + \|Ly\|^2, \ \ \ \mbox{ for every } L\in \cl L.
\end{equation}
Set $K_1 = PH$, $K_2 = P^{\perp}H$, $x_1 = Px$, $x_2 = P^{\perp} x$, $y_1 = Py$, $y_2 = P^{\perp}y$, and
let $\cl L_1 = \{L|_{K_1} : L\in \cl L\}$ and $\cl L_2 = \{L|_{K_2} : L\in \cl L\}$ be
the restrictions of $\cl L$ to $K_1$ and $K_2$, respectively. (Note that, since $K_1$ and $K_2$ are invariant for each $L\in \cl L$,
these restrictions are well-defined.)

We claim that
\begin{equation}\label{eq_new}
\|L_1^{\perp} y_1\|^2 \leq \|L_1^{\perp} x_1\|^2, \ \ \ L_1\in \cl L_1.
\end{equation}
To show this, assume that 
$\|PL^{\perp} y\|^2 > \|PL^{\perp} x\|^2$ for some $L\in \cl L$.
Then
\begin{eqnarray*}
\|(PL)^{\perp} x\|^2 + \|PL y\|^2 & = & \|P^{\perp}L x\|^2 + \|P^{\perp} L^{\perp} x\|^2 + \|PL^{\perp} x\|^2 + \|PL y\|^2\\
& < & \|P^{\perp}L x\|^2 + \|P^{\perp} L^{\perp} x\|^2 + \|PL^{\perp} y\|^2 + \|PL y\|^2\\
& = & \|P^{\perp} x\|^2 + \|P y\|^2,
\end{eqnarray*}
which contradicts (\ref{eq_max}) and hence (\ref{eq_new}) is established. By \cite{hop},
there exists $T_1\in \ball(\Alg\cl L_1)$ such that $T_1x_1 = y_1$.

Similarly, we claim that
\begin{equation}\label{eq_new2}
\|L_2x_2\|^2 \leq \|L_2y_2\|^2, \ \ \ L_2\in \cl L_2.
\end{equation}
To show (\ref{eq_new2}) suppose, by way of contradiction, that 
$\|L P^{\perp} x\|^2  > \|LP^{\perp} y\|^2$ for some $L\in \cl L$. 
Then
\begin{eqnarray*}
& & \|(P\vee L)^{\perp} x\|^2 + \|(P\vee L) y\|^2\\ 
& = & \|P^{\perp}L^{\perp} x\|^2 + \|P^{\perp} L y\|^2 + \|PL^{\perp} y\|^2 + \|PL y\|^2\\
& < & \|P^{\perp}L^{\perp} x\|^2 + \|P^{\perp} L x\|^2 + \|PL^{\perp} y\|^2 + \|PL y\|^2
 =  \|P^{\perp} x\|^2 + \|P y\|^2,
\end{eqnarray*}
which contradicts (\ref{eq_max}) and hence (\ref{eq_new2}) is established. By \cite{hop},
there exists $T_2\in \ball((\Alg \cl L_2)^*)$ such that $T_2y_2 = x_2$.
Let $T = T_1\oplus T_2^*\in \cl B(K_1\oplus K_2)$. 
For every $L\in \cl L$, we have that
\begin{eqnarray*}
L^{\perp}TL 
& = & 
(L^{\perp}P\oplus L^{\perp}P^{\perp}) (T_1\oplus T_2^*) (LP\oplus LP^{\perp})\\ 
& = &
(L^{\perp}PT_1LP)\oplus (L^{\perp}P^{\perp}T_2^*LP^{\perp}) = 0,
\end{eqnarray*}
and so $T\in \ball(\cl A)$.
Also,
\begin{eqnarray*}
\|\omega_{x,y}|_{\cl A}\| 
& \geq & 
|\omega_{x,y}(T)| = |((PTP + P^{\perp}TP^{\perp})x,y)|\\ 
& = & 
|(T_1x_1,y_1) + (T_2^*x_2,y_2)| = |(T_1x_1,y_1) + (x_2,T_2y_2)|\\ 
& = & 
\|y_1\|^2 + \|x_2\|^2 
= \|P^{\perp} x\|^2 + \|Py\|^2 = d((x,y),\cl E_{\cl L})^2.
\end{eqnarray*}
Thus, the claim of the theorem is proved in the case $\cl L$ is finite.

Now assume $\cl L$ is an arbitrary CSL. It is 
straightforward that there exists a sequence $(\cl L_n)_{n=1}^{\infty}$
of finite CSL's such that $\cl L_n\subseteq \cl L_{n+1}$ for each $n\in \bb{N}$ and $\cup_{n=1}^{\infty} \cl L_n$
is strongly dense in $\cl L$. 
We claim that 
\begin{equation}\label{eq_ineqq}
\inf\mbox{}_{X\in \Conv \cl L} \left(((I-X)x,x) + (Xy,y)\right) \leq \liminf\mbox{}_{n\in \bb{N}} d\left((x,y),\cl E_{\cl L_n}\right)^2.
\end{equation}
To see this, 
suppose that $d\left((x,y),\cl E_{\cl L_{n_k}}\right)^2\rightarrow_{k\to\infty }\delta$, for some 
strictly increasing sequence $(n_k)_{k\in \bb{N}} \subseteq \bb{N}$. 
Let $\epsilon > 0$ and $k_0\in \bb{N}$ be such that 
\begin{equation}\label{eq_suse}
\|L^{\perp}_{n_k} x\|^2 + \|L_{n_k}y\|^2 \leq \delta+\epsilon, \ \ \ k\geq k_0.
\end{equation}
Since the unit ball of $\cl B(H)$ is compact in the weak operator topology,
we may assume, after passing to a subsequence if necessary, that $L_{n_k}\rightarrow_{k\to\infty} A$
weakly, for some $A\in\cl B(H)$. By assumption, 
$A\in\Conv\cl L$. Moreover,
$$\|L^{\perp}_{n_k}x\|^2 + \|L_{n_k}y\|^2 \rightarrow_{k\to\infty} ((I-A)x,x) + (Ay,y),$$ 
and now (\ref{eq_suse}) implies 
$$((I-A)x,x) + (Ay,y)\leq \delta+\epsilon.$$
Inequality (\ref{eq_ineqq}) follows by letting 
$\epsilon$ tend to zero.

Let $\cl A_n = \Alg \cl L_n$, $n\in \bb{N}$; then $\cap_{n=1}^{\infty}\cl A_n = \cl A$.
We have that 
\begin{equation}\label{eq_dec}
\limsup\mbox{}_{n\in \bb{N}} \|\omega_{x,y}|_{\cl A_{n}}\| \leq \|\omega_{x,y}|_{\cl A}\|.
\end{equation}
Indeed, suppose that 
$$\|\omega_{x,y}|_{\cl A_{n_k}}\| \rightarrow_{k\to\infty} \delta,$$ 
for some strictly increasing sequence $(n_k)_{k\in \bb{N}} \subseteq \bb{N}$. 
Let $T_{k}\in \ball(\cl A_{n_k})$ be such that 
$\|\omega_{x,y}|_{\cl A_{n_k}}\| = (T_k x,y)$.
Passing to a subsequence if necessary, we may assume that 
$T_k\rightarrow_{k\to\infty} T$ in the weak operator topology, for some $T\in \cl B(H)$.
Then $T\in \ball(\cl A)$ and so
$$\delta = \lim\mbox{}_{k\to\infty} (T_k x,y) = (Tx,y)\leq \|\omega_{x,y}|_{\cl A}\|.$$

By the first part of the proof, inequality (\ref{eq_el}) holds for each of the algebras $\cl A_n$.
Lemma \ref{l_conhulls} and inequalities (\ref{eq_ineqq}) and (\ref{eq_dec}) now imply that (\ref{eq_el}) holds for $\cl A$.
\end{proof}

\noindent {\bf Remark } 
Suppose that an operator algebra $\cl A\subseteq \cl B(H)$ has property (V)
and let $x,y\in H$, $\|x\| = \|y\| = 1$.
Then a necessary and sufficient condition for the existence of an operator $T\in \ball(\cl A)$
with $Tx = y$ is the validity of the inequalities 
\begin{equation}\label{eq_Lperp}
\|L^{\perp} y\|\leq \|L^{\perp}x\|, \ \ \  L\in \Lat \cl A.
\end{equation}
Indeed, it is straightforward to check that the inequalities 
(\ref{eq_Lperp}) are necessary. 
Conversely, assuming (\ref{eq_Lperp}), we have
$$\|L^{\perp} x\|^2 + \|Ly\|^2 \geq \|L^{\perp}y\|^2 + \|Ly\|^2 = \|y\|^2 = 1, \ \ L\in \Lat\cl A,$$ 
and since $\cl A$ is assumed to have property (V), by Theorem \ref{th_arv},
$\|\omega_{x,y}|_{\cl A}\| \geq 1$. Since $x$ and $y$ are unit vectors,
$\|\omega_{x,y}|_{\cl A}\| = 1$, and hence there exists $T\in \ball(\cl A)$ such that $(Tx,y) = 1$.
Thus, we have equality in the Cauchy-Schwarz inequality and hence $\lambda Tx = y$ for some (unimodular) scalar $\lambda$.

We note, however, that the observation in the previous paragraph cannot be used to
give a different proof of the necessary and sufficient
conditions for the solution of the equation $Tx = y$ given in \cite{hop} since
the result of \cite{hop} was used in the proof of Theorem \ref{th_arv}.

\section{Validity of (V') and violation of (V)}\label{s_vtonos}

In this section, we exhibit an example of a weakly closed unital operator 
algebra without property (V). We say that an operator algebra $\cl A\subseteq \cl B(H)$ 
satisfies \emph{property {\rm (V')}} if
there exists $c > 0$ such that 
\begin{equation}\label{eq_el2}
\inf\mbox{}_{L\in \Lat\cl A} \left(\|L^{\perp}x\|^2 + \|Ly\|^2\right) \leq c\|\omega_{x,y}|_{\cl A}\|,
\end{equation}
for all $x,y\in H$. 
Property (V') is clearly weaker that (V) and can be thought of as a quantitative version of the latter.

Let $N$ and $M$ be closed subspaces of a Hilbert space $H$. 
Following Halmos \cite{ha}, we say that $N$ and $M$ are in generic position if
$$N \cap M = N^{\perp}\cap M = M^{\perp}\cap N = M^{\perp}\cap N^{\perp} = \{0\}.$$
We say that the angle between $N$ and $M$ is positive if the algebraic sum $N + M$ is 
closed; otherwise, we say that the angle between $N$ and $M$ is zero.

It follows from \cite{ha} that if $N$ and $M$ are closed 
subspaces of a Hilbert space $H$ in generic position, then
there exists a Hilbert space $H_0$ such that,
up to unitary equivalence, 
$H = H_0\oplus H_0$,
$$N = \{(x, Bx): x \in H_0\} \ \ \mbox{ and } \ \  M = \{(x, -Bx): x \in H_0\}$$ 
where the operator $B \in \cl B(H_0)$ satisfies the conditions
\begin{enumerate}
 \item[(a)]
 $0 \leq B \leq I$;
 \item[(b)]
 $\ker B=\ker (I-B)=\{0\}$.
 \end{enumerate}
Moreover, the angle between $N$ and $M$ is positive if and only if the operator $B$ is invertible.

Let $P$ (resp. $Q$) be the orthogonal projection onto $N$ (resp. $M$).
Writing 
$$\Gamma = \left[ \begin{matrix}
(I+B^2)^{-1} &  0 \\
0 &  (I+B^2)^{-1} \\
\end{matrix} \right],$$
we have
$$P = 
\Gamma \left[ \begin{matrix}
I &  B \\
 B &  B^2 \\
\end{matrix} \right], \ \ 
P^{\perp}= 
\Gamma \left[ \begin{matrix}
B^2 &  -B \\
 -B &  I \\
\end{matrix} \right],
$$

$$Q = 
\Gamma \left[ \begin{matrix}
I &  -B \\
 -B &  B^2 \\
\end{matrix} \right], \ \mbox{ and } \ 
Q^{\perp} = 
\Gamma \left[ \begin{matrix}
B^2 &  B \\
 B &  I \\
\end{matrix} \right].
$$

In the rest of the section, we denote by 
$\cl A$ the algebra of operators on $H$ that leave $N$ and $M$ invariant; 
note that $\Lat\cl A = \{0,P,Q,I\}$. 

Let 
$$\cl B = \left\{T=
\left[ \begin{matrix}
C &  0 \\
0 &  D \\
\end{matrix} \right] :  C, D \in \cl B(H_0)\right\}.$$
We will need the following result (see \cite{p}). 

\begin{proposition}\label{p_sim}
Suppose that the angle between $N$ and $M$ is positive. 
Set $a=\sqrt{2}/2$ and 
$$S=
\left[ \begin{matrix}
B^{-\frac{1}{2}} &  0 \\
 0 &  B^{\frac{1}{2}} \\
\end{matrix} \right]
\left[ \begin{matrix}
aI &  -aI \\
 aI &  aI \\
\end{matrix} \right].$$
Then the operator $S$ is invertible and $\cl A=S\cl B S^{-1}$. 
\end{proposition}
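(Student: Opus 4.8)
The plan is to verify directly that $S$ is invertible and that conjugation by $S$ carries the block-diagonal algebra $\cl B$ onto $\cl A$. I would organize the argument into three parts: establishing invertibility of $S$, showing the inclusion $S\cl B S^{-1}\subseteq \cl A$, and showing the reverse inclusion $\cl A\subseteq S\cl B S^{-1}$.

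For invertibility, I would first note that $S$ factors as a product of two operators. The left factor, $\mathrm{diag}(B^{-1/2}, B^{1/2})$, is invertible precisely because the angle is positive: by the displayed dichotomy in the text, positive angle means $B$ is invertible, so $B^{1/2}$ is invertible with bounded inverse $B^{-1/2}$, and both diagonal entries are bounded invertible operators. The right factor $\left[\begin{smallmatrix} aI & -aI\\ aI & aI\end{smallmatrix}\right]$ with $a=\sqrt 2/2$ is a unitary (a rotation by $\pi/4$), since its columns are orthonormal; one checks its adjoint times itself is $I$. A product of invertibles is invertible, giving the first claim.

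For the algebra identity, the cleanest route is to characterize $\cl A$ intrinsically and then compute what $S^{-1}(\Lat\cl A)$ becomes. Since $\cl A$ is exactly the algebra leaving $N$ and $M$ invariant, and $T\in\cl A$ iff $T$ leaves the ranges of $P$ and $Q$ invariant, I would show that $S^{-1}PS$ and $S^{-1}QS$ are the two coordinate projections $\left[\begin{smallmatrix} I & 0\\ 0 & 0\end{smallmatrix}\right]$ and $\left[\begin{smallmatrix} 0 & 0\\ 0 & I\end{smallmatrix}\right]$ (in some order), using the explicit matrix forms of $P$ and $Q$ given above together with the factored form of $S$. Once this is known, a block operator $R$ lies in $S^{-1}\cl A S$ iff it leaves both coordinate subspaces invariant, which happens iff $R$ is block-diagonal, i.e. $R\in\cl B$. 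Conjugating back by $S$ then yields $\cl A = S\cl B S^{-1}$ as both inclusions follow simultaneously from this equivalence. Alternatively, if one prefers to avoid identifying $S^{-1}PS$ exactly, I would directly verify that for each generator $\left[\begin{smallmatrix} C & 0\\ 0 & D\end{smallmatrix}\right]\in\cl B$ the conjugate $S\left[\begin{smallmatrix} C & 0\\ 0 & D\end{smallmatrix}\right]S^{-1}$ satisfies $P^\perp(\,\cdot\,)P=0$ and $Q^\perp(\,\cdot\,)Q=0$, and conversely that every $T$ with these two annihilation properties pulls back to a block-diagonal operator.

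The main obstacle I expect is the reverse inclusion together with the bookkeeping of the noncommuting factors $B^{\pm 1/2}$: the two defining constraints $P^\perp T P=0$ and $Q^\perp T Q=0$ must be shown to force block-diagonality of $S^{-1}TS$, and this requires that the similarity genuinely \emph{decouples} the two subspaces rather than merely making each constraint satisfiable. The key computational fact that makes everything work is that the rotation by $a$ sends the invariant subspaces for the pair $\{P,Q\}$ to the coordinate axes, while the diagonal dressing $\mathrm{diag}(B^{-1/2},B^{1/2})$ is exactly the intertwiner needed to absorb the off-diagonal $B$ entries appearing in $P$ and $Q$; verifying this absorption is where the invertibility of $B$ (positive angle) is used essentially, and it is the step I would carry out most carefully.
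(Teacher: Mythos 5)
The paper itself gives no proof of this proposition --- it is quoted from Papadakis \cite{p} --- so there is nothing internal to compare against; your plan of a direct verification is reasonable, and its overall strategy (show that the similarity $S$ exchanges the coordinate decomposition with the pair $N$, $M$) is the right one. The invertibility part is correct: the rotation factor is unitary, and $\left[\begin{smallmatrix} B^{-1/2} & 0 \\ 0 & B^{1/2}\end{smallmatrix}\right]$ is bounded and invertible precisely because a positive angle forces $B$ to be invertible.

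However, the central claim of your main route is false as stated: $S^{-1}PS$ and $S^{-1}QS$ are \emph{not} the coordinate projections. Since $S$ is not unitary, conjugation preserves idempotents but not orthogonal projections. Carrying out the computation you propose gives
$$S^{-1}PS = \left[ \begin{matrix} I & (I+B^2)^{-1}(B^2-I) \\ 0 & 0 \end{matrix} \right],$$
an idempotent with range $H_0\oplus\{0\}$ that is not self-adjoint: the off-diagonal entry vanishes only if $B=I$, which is excluded by $\ker(I-B)=\{0\}$. Equivalently, $S^{-1}PS=\left[\begin{smallmatrix} I & 0 \\ 0 & 0\end{smallmatrix}\right]$ would force $S^{-1}(N^{\perp})=\{0\}\oplus H_0$, i.e. $N^{\perp}=M$, which fails for subspaces in generic position. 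The repair is to argue with subspaces rather than with the projections themselves: compute $S(u,0)=(aB^{-1/2}u,\,aB^{1/2}u)=(x,Bx)$ with $x=aB^{-1/2}u$, and $S(0,v)=(-aB^{-1/2}v,\,aB^{1/2}v)=(x,-Bx)$ with $x=-aB^{-1/2}v$, so that $S(H_0\oplus\{0\})=N$ and $S(\{0\}\oplus H_0)=M$, surjectivity using the invertibility of $B^{1/2}$. Then $T$ leaves $N$ and $M$ invariant if and only if $S^{-1}TS$ leaves both coordinate subspaces invariant, if and only if $S^{-1}TS$ is block diagonal; this gives both inclusions of $\cl A=S\cl B S^{-1}$ at once. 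With this rephrasing --- which is essentially the alternative route you yourself sketch via the annihilation conditions $P^{\perp}(\cdot)P=0$ and $Q^{\perp}(\cdot)Q=0$ --- your argument becomes a complete and correct proof.
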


Fix $x_1, x_2, y_1, y_2 \in H_0$, and let $x = (x_1, x_2)$, $y = (y_1, y_2)$. 
Assuming that the angle between $N$ and $M$ is positive, let
$$m_1 = a^2\|B^{\frac{1}{2}}x_1+B^{-\frac{1}{2}}x_2\|^2 + a^2 \|B^{\frac{1}{2}}x_1-B^{-\frac{1}{2}}x_2\|^2,$$
$$m_2 = a^2 \|B^{\frac{1}{2}}x_1+B^{-\frac{1}{2}}x_2\|^2 + a^2 \|B^{-\frac{1}{2}}y_1 - B^{\frac{1}{2}}y_2\|^2,$$
$$m_3 = a^2 \|B^{-\frac{1}{2}}y_1+B^{\frac{1}{2}}y_2\| ^2  + a^2 \|B^{\frac{1}{2}}x_1 - B^{-\frac{1}{2}}x_2\|^2$$
and 
$$m_4 = a^2 \|B^{-\frac{1}{2}}y_1+B^{\frac{1}{2}}y_2\|^2  +  a^2 \|B^{-\frac{1}{2}}y_1 - B^{\frac{1}{2}}y_2\|^2.$$

\begin{lemma}\label{propothem}
We have that 
$\min\{m_1,m_2,m_3,m_4\}\leq \|\omega_{S^{-1}x, S^*y}|_{\cl B}\|.$
\end{lemma}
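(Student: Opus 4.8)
The plan is to exploit the product structure of $\cl B$ and reduce everything to an elementary inequality about four nonnegative numbers. First I would record the value of a vector functional on $\cl B$. Since every contraction in $\cl B$ is of the form $\mathrm{diag}(C,D)$ with $\|C\|\le 1$ and $\|D\|\le 1$, and $C,D$ may be chosen independently (together with arbitrary phases), a standard rank-one argument gives, for $\xi=(\xi_1,\xi_2)$ and $\eta=(\eta_1,\eta_2)$ in $H_0\oplus H_0$,
$$\|\omega_{\xi,\eta}|_{\cl B}\| = \|\xi_1\|\,\|\eta_1\| + \|\xi_2\|\,\|\eta_2\|.$$
This reduces the problem to computing the norms of the components of $S^{-1}x$ and $S^*y$.

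The second step is to evaluate $S^{-1}x$ and $S^*y$ explicitly. Writing $S = D\,U$ with $D=\mathrm{diag}(B^{-1/2},B^{1/2})$ self-adjoint and $U=\left[\begin{smallmatrix} aI & -aI \\ aI & aI\end{smallmatrix}\right]$ unitary (note $2a^2=1$), one obtains $S^{-1}=U^*\,\mathrm{diag}(B^{1/2},B^{-1/2})$ and $S^*=U^*\,\mathrm{diag}(B^{-1/2},B^{1/2})$. Setting $\xi=S^{-1}x$ and $\eta=S^*y$, a direct computation then yields
$$\|\xi_1\|^2 = a^2\|B^{1/2}x_1 + B^{-1/2}x_2\|^2, \qquad \|\xi_2\|^2 = a^2\|B^{1/2}x_1 - B^{-1/2}x_2\|^2,$$
$$\|\eta_1\|^2 = a^2\|B^{-1/2}y_1 + B^{1/2}y_2\|^2, \qquad \|\eta_2\|^2 = a^2\|B^{-1/2}y_1 - B^{1/2}y_2\|^2,$$
and hence $\|\omega_{S^{-1}x,S^*y}|_{\cl B}\| = \|\xi_1\|\,\|\eta_1\| + \|\xi_2\|\,\|\eta_2\|$.

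The last step is a short combinatorial inequality. Putting $P=\|\xi_1\|^2$, $R=\|\eta_1\|^2$, $Q=\|\xi_2\|^2$, $T=\|\eta_2\|^2$, the defining formulas identify the four given quantities as $m_1=P+Q$, $m_2=P+T$, $m_3=R+Q$, $m_4=R+T$; that is, they run over \emph{all} sums of one element of $\{P,R\}$ with one element of $\{Q,T\}$. Therefore $\min\{m_1,m_2,m_3,m_4\}=\min\{P,R\}+\min\{Q,T\}$, and since the geometric mean of two nonnegative reals dominates their minimum, $\min\{P,R\}\le\sqrt{PR}=\|\xi_1\|\,\|\eta_1\|$ and $\min\{Q,T\}\le\sqrt{QT}=\|\xi_2\|\,\|\eta_2\|$. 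Adding these two inequalities gives exactly $\min\{m_1,m_2,m_3,m_4\}\le\|\omega_{S^{-1}x,S^*y}|_{\cl B}\|$.

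I expect the only delicate point to be the bookkeeping of the second step: one must track correctly which factor carries $B^{1/2}$ and which carries $B^{-1/2}$, and the signs introduced by $U$, so that the component norms of $S^{-1}x$ and $S^*y$ match the expressions appearing in $m_1,\dots,m_4$. Once this matching is in place, the decisive \emph{conceptual} observation is that the $m_i$ are precisely the complete set of cross-sums of the pairs $\{P,R\}$ and $\{Q,T\}$, after which the inequality follows at once from the elementary fact that a geometric mean dominates a minimum.
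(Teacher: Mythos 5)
Your proposal is correct, and its first half coincides with the paper's proof: the paper likewise computes $S^{-1}x$ and $S^*y$ componentwise (your factorization $S=\mathrm{diag}(B^{-1/2},B^{1/2})\,U$ with $U$ unitary gives exactly the paper's formulas). Where you diverge is the concluding step. The paper simply observes that the four numbers $m_1,\dots,m_4$ are the quantities $\|L^{\perp}\xi\|^2+\|L\eta\|^2$, $\xi=S^{-1}x$, $\eta=S^*y$, as $L$ runs over $\Lat\cl B=\{0,P_1,P_2,I\}$, and then invokes property (V) for $\cl B$, which it has available from its general machinery ($\cl B(H_0)$ has (V), and (V) passes to direct sums). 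You instead prove the needed instance of (V) from scratch: the exact formula $\|\omega_{\xi,\eta}|_{\cl B}\|=\|\xi_1\|\|\eta_1\|+\|\xi_2\|\|\eta_2\|$ for the direct sum of two full algebras, the identification of the $m_i$ as all cross-sums of $\{P,R\}$ with $\{Q,T\}$, and the inequality $\min\{u,v\}\le\sqrt{uv}$. Your route is self-contained and arguably more transparent — it exposes that the lemma is really just ``minimum dominated by geometric mean'' in disguise — whereas the paper's citation is shorter and keeps the conceptual thread (everything reduced to property (V)) visible; in effect your argument unwinds precisely what the paper's appeal to Corollary 2.5/Theorem 2.4 does under the hood for this particular algebra.
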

\begin{proof}
Note that 
 $$S^{-1}x=
 \left[ \begin{matrix}
 a(B^{\frac{1}{2}}x_1+B^{-\frac{1}{2}}x_2)\\
 a(-B^{\frac{1}{2}}x_1+B^{-\frac{1}{2}}x_2) \\
\end{matrix}\right]$$
and
$$S^{*}y=
 \left[ \begin{matrix}
 a(B^{-\frac{1}{2}}y_1+B^{\frac{1}{2}}y_2)\\
 a(-B^{-\frac{1}{2}}y_1+B^{\frac{1}{2}}y_2) \\
\end{matrix}\right].$$
The assertion follows from the fact that 
$\cl B$ satisfies property (V) (see Theorem \ref{th_vncase}). 
\end{proof}

Set
$$a_1=\|x_1\|^2+\|x_2\|^2, \ 
a_2=\|P^{\perp}x\|^2 + \|Py\|^2,$$
$$a_3=\|Q^{\perp}x\|^2 + \|Qy\|^2 \ \mbox{ and } \
a_4=\|y_1\|^2+\|y_2\|^2.$$

\begin{lemma}\label{lemmathem}
There exists $c \geq 0$ such that
$$a_1\leq c m_1, a_2\leq c m_3, a_3\leq c m_2 \mbox{ and }   a_4\leq c m_4.$$
In particular, there exists $c \geq  0$ with 
$$\min\{a_1, a_2, a_3, a_4\}\leq c \min\{m_1, m_2, m_3, m_4\}.$$
\end{lemma}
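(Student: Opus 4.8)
The plan is to prove the four comparisons $a_1\le cm_1$, $a_2\le cm_3$, $a_3\le cm_2$ and $a_4\le cm_4$ separately, with a single constant $c$ depending only on $\|B^{-1}\|$; this quantity is finite precisely because the positive angle hypothesis forces $B$ to be invertible. The final ``in particular'' clause is then automatic: the assignment $1\mapsto 1$, $2\mapsto 3$, $3\mapsto 2$, $4\mapsto 4$ is a bijection of $\{1,2,3,4\}$, so if $m_{j_0}=\min_j m_j$ and $i_0$ is its partner, then $\min_i a_i\le a_{i_0}\le cm_{j_0}=c\min_j m_j$.

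First I would simplify $m_1$ and $m_4$ by the parallelogram law. Since $a^2=\tfrac12$, this gives $m_1=\|B^{1/2}x_1\|^2+\|B^{-1/2}x_2\|^2=(Bx_1,x_1)+(B^{-1}x_2,x_2)$ and, symmetrically, $m_4=(B^{-1}y_1,y_1)+(By_2,y_2)$. Here the hypothesis enters: a positive operator with $0\le B\le I$ that is invertible satisfies $\|B^{-1}\|^{-1}I\le B\le I$, whence $(Bx_1,x_1)\ge\|B^{-1}\|^{-1}\|x_1\|^2$ and $(B^{-1}x_2,x_2)\ge\|x_2\|^2$. Thus $m_1\ge\|B^{-1}\|^{-1}a_1$, i.e. $a_1\le\|B^{-1}\|m_1$, and likewise $a_4\le\|B^{-1}\|m_4$. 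This is the only step using invertibility of $B$, and it is exactly what fails when the angle is zero.

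Next I would handle $a_2$ and $a_3$, which require the projection norms. Using the explicit matrices for $P,P^{\perp},Q,Q^{\perp}$ (equivalently, minimizing the distance of $x$ to $N$ and to $M$) together with the fact that $B$ and $(I+B^2)^{-1}$ commute, a short computation collapses the cross terms into the clean identities $\|P^{\perp}x\|^2=((I+B^2)^{-1}(Bx_1-x_2),Bx_1-x_2)$ and $\|Py\|^2=((I+B^2)^{-1}(y_1+By_2),y_1+By_2)$, with the analogous formulas for $Q$ involving $Bx_1+x_2$ and $y_1-By_2$. On the other side, writing $B^{1/2}x_1-B^{-1/2}x_2=B^{-1/2}(Bx_1-x_2)$ and so on, each norm in $m_2$ and $m_3$ becomes $(B^{-1}\,\cdot\,,\cdot)$ of the same vector. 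The comparison then reduces to the single operator inequality $(I+B^2)^{-1}\le\tfrac12 B^{-1}$, which on the spectrum $(0,1]$ is just $2t\le 1+t^2$, i.e. $(1-t)^2\ge 0$. Applied termwise this yields $\|P^{\perp}x\|^2\le a^2\|B^{1/2}x_1-B^{-1/2}x_2\|^2$ and $\|Py\|^2\le a^2\|B^{-1/2}y_1+B^{1/2}y_2\|^2$, whose sum is exactly $m_3$; hence $a_2\le m_3$, and symmetrically $a_3\le m_2$, both with constant $1$.

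Taking $c=\|B^{-1}\|$ (which is $\ge 1$ since $B\le I$) serves all four inequalities simultaneously, and the ``in particular'' statement follows from the bijection remark. I expect the projection-norm computations to be the main obstacle: they are not deep, but one must carefully pair each $a_i$ with the correct $m_j$ and recognize the telescoping that produces $(I+B^2)^{-1}$; spotting the elementary operator inequality $(I+B^2)^{-1}\le\tfrac12 B^{-1}$ is the one genuinely clever point.
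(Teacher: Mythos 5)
Your proof is correct, and it follows the paper's overall architecture — the same pairing $a_1\leftrightarrow m_1$, $a_2\leftrightarrow m_3$, $a_3\leftrightarrow m_2$, $a_4\leftrightarrow m_4$, the same use of invertibility of $B$, and the same ``in particular'' step — but the execution of the two middle inequalities is genuinely different and cleaner. The paper estimates $\|P^{\perp}x\|^2+\|Py\|^2$ by pulling out $\|\Gamma\|^2$ and running a cascade of term-by-term norm bounds (using $\|B\|\le 1$ and factorizations through $B^{1/2}$), which accumulates a constant of the form $2\|\Gamma\|^2\|B^{1/2}\|^2a^{-2}$; you instead exploit that $P^{\perp}$ and $P$ are projections to get the exact quadratic-form identities $\|P^{\perp}x\|^2=((I+B^2)^{-1}(Bx_1-x_2),Bx_1-x_2)$ and $\|Py\|^2=((I+B^2)^{-1}(y_1+By_2),y_1+By_2)$, after which the single spectral inequality $(I+B^2)^{-1}\le\tfrac12 B^{-1}$ (i.e.\ $2t\le 1+t^2$ on $(0,1]$) gives $a_2\le m_3$ and $a_3\le m_2$ with constant $1$, and hence the uniform constant $c=\|B^{-1}\|$. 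What your route buys is sharper constants and shorter computations; what the paper's buys is that it never needs the exact projection identities, only submultiplicative estimates. One cosmetic imprecision: you say invertibility of $B$ enters only in the $a_1$, $a_4$ steps, but the inequality $(I+B^2)^{-1}\le\tfrac12 B^{-1}$, and indeed the very definition of the $m_j$'s via $B^{-1/2}$, also presuppose it — this is harmless since the whole lemma is stated under the positive-angle hypothesis. (Note also that the paper's displayed definitions of $a_2,a_3$ omit the squares, but its own proof, like yours, uses $\|P^{\perp}x\|^2+\|Py\|^2$; you matched the intended reading.)
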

\begin{proof}
We have 
\begin{eqnarray*}
a_1
& = & 
\|x_1\|^2+\|x_2\|^2\leq \|B^{-1}\|^2\|Bx_1\|^2+\|x_2\|^2\\
& \leq & 
\|B^{-1}\|^2(\|Bx_1\|^2+\|x_2\|^2)\\
& = & 
\frac{1}{2}\|B^{-1}\|^2(\|Bx_1-x_2\|^2+\|Bx_1+x_2\|^2)\\
& \leq & 
\frac{1}{2}\|B^{-1}\|^2(\|B^{\frac{1}{2}}(B^{\frac{1}{2}}x_1-B^{-\frac{1}{2}}x_2)\|^2+\|B^{\frac{1}{2}}(B^{\frac{1}{2}}x_1+B^{-\frac{1}{2}}x_2)\|^2)\\
& \leq & 
\frac{1}{2}\|B^{-1}\|^2\|B^{\frac{1}{2}}\|^2(\|B^{\frac{1}{2}}x_1-B^{-\frac{1}{2}}x_2\|^2+\|B^{\frac{1}{2}}x_1+B^{-\frac{1}{2}}x_2)\|^2)\\
& \leq & \frac{1}{2}\|B^{-1}\|^2\|B^{\frac{1}{2}}\|^2a^{-2}m_1,
\end{eqnarray*}

\begin{eqnarray*}
a_2
& = & 
\|P^{\perp}x\|^2+ \|Py\|^2\\
& = &  
\left\| 
\Gamma \left[ \begin{matrix}
B^2 &  -B \\
 -B &  I \\
\end{matrix} \right]\left[ \begin{matrix}
x_1  \\
 x_2 \\
\end{matrix} \right]\right\|^2
+ 
\left\|
\Gamma 
\left[ \begin{matrix}
I &  B \\
 B &  B^2 \\
\end{matrix} \right]
 \left[ \begin{matrix}
y_1  \\
 y_2 \\
\end{matrix}\right]\right\|^2\\
& \leq & 
\|\Gamma\|^2\left(\left\| 
 \left[ \begin{matrix}
B^2 &  -B \\
 -B &  I \\
\end{matrix} \right]\left[ \begin{matrix}
x_1  \\
 x_2 \\
\end{matrix} \right]\right\|^2
+ 
\left\|
\left[ \begin{matrix}
I &  B \\
 B &  B^2 \\
\end{matrix} \right]
 \left[ \begin{matrix}
y_1  \\
 y_2 \\
\end{matrix}\right]\right\|^2\right)\\
& \leq & 
\|\Gamma\|^2\left(\left\| 
 \left[ \begin{matrix}
B^2x_1   -Bx_2 \\
 -Bx_1 +  x_2 \\
\end{matrix} \right]\right\|^2
+ 
\left\|
\left[ \begin{matrix}
y_1 +  By_2 \\
 By_1  + B^2y_2 \\
\end{matrix} \right]
 \right\|^2\right)\\
& \leq & 
\|\Gamma\|^2(\|
B^2x_1   -Bx_2 \|^2+
\|-Bx_1 +  x_2 \|^2\\
& + & 
\|y_1 +  By_2\|^2 +\|By_1  + B^2y_2 \|^2)\\
& \leq & 
\|\Gamma\|^2(\|B\|^2\|
Bx_1   -x_2 \|^2+
 \|Bx_1 -  x_2 \|^2\\
& + &  
\|y_1 +  By_2\|^2 +\|B\|^2\| y_1  + By_2 \|^2)\\
& \leq & 
2\|\Gamma\|^2\left(\|
Bx_1   -x_2 \|^2+
\|
 y_1  + By_2 \|^2
\right)\\
& \leq & 
2\|\Gamma\|^2\left(\|B^{\frac{1}{2}}(
B^{\frac{1}{2}}x_1   -B^{-\frac{1}{2}}x_2) \|^2+
\|B^{\frac{1}{2}}(
 B^{-\frac{1}{2}}y_1  + B^{\frac{1}{2}}y_2) \|^2
\right)\\
& \leq & 
2\|\Gamma\|^2\|B^{\frac{1}{2}}\|^2\left(\|
B^{\frac{1}{2}}x_1   -B^{-\frac{1}{2}}x_2 \|^2+
\|
 B^{-\frac{1}{2}}y_1  + B^{\frac{1}{2}}y_2 \|^2
\right)\\
& \leq & 2\|\Gamma\|^2 a^{-2}m_3,
\end{eqnarray*}

\begin{eqnarray*}
a_3
& = & 
\|Q^{\perp}x\|^2+ \|Qy\|^2\\
& =  & 
\left\| 
\Gamma \left[ \begin{matrix}
B^2 &  B \\
 B &  I \\
\end{matrix} \right]\left[ \begin{matrix}
x_1  \\
 x_2 \\
\end{matrix} \right]\right\|^2
+ 
\left\|
\Gamma 
\left[ \begin{matrix}
I &  -B \\
 -B &  B^2 \\
\end{matrix} \right]
 \left[ \begin{matrix}
y_1  \\
 y_2 \\
\end{matrix}\right]\right\|^2\\
& \leq & 
\|\Gamma\|^2\left(\left\| 
 \left[ \begin{matrix}
B^2 &  B \\
 B &  I \\
\end{matrix} \right]\left[ \begin{matrix}
x_1  \\
 x_2 \\
\end{matrix} \right]\right\|^2
+ 
\left\|
\left[ \begin{matrix}
I &  -B \\
 -B &  B^2 \\
\end{matrix} \right]
 \left[ \begin{matrix}
y_1  \\
 y_2 \\
\end{matrix}\right]\right\|^2\right)\\
& = & 
\|\Gamma\|^2\left(\left\| 
 \left[ \begin{matrix}
B^2x_1   +Bx_2 \\
 Bx_1 +  x_2 \\
\end{matrix} \right]\right\|^2
+ 
\left\|
\left[ \begin{matrix}
y_1 -  By_2 \\
 -By_1  + B^2y_2 \\
\end{matrix} \right]
 \right\|^2\right)\\
& \leq & 
\|\Gamma\|^2(\|
B^2x_1   + Bx_2 \|^2+
 \|Bx_1 +  x_2 \|^2\\
& + & 
\|y_1 -  By_2\|^2 +\|By_1  - B^2y_2 \|^2)\\
& \leq & 
\|\Gamma\|^2(\|B\|^2\|
Bx_1   + x_2 \|^2+
 \|Bx_1 +  x_2 \|^2\\
& + & 
\|y_1 -  By_2\|^2 +\|B\|^2\| y_1  - By_2 \|^2)\\
& \leq & 
2\|\Gamma\|^2\left(\|Bx_1   +x_2 \|^2+\|y_1  - By_2 \|^2\right)\\
& \leq & 
2\|\Gamma\|^2\left(\|B^{\frac{1}{2}}(
B^{\frac{1}{2}}x_1   +B^{-\frac{1}{2}}x_2) \|^2+
\|B^{\frac{1}{2}}(
 B^{-\frac{1}{2}}y_1  - B^{\frac{1}{2}}y_2) \|^2\right)\\
& \leq & 
2\|\Gamma\|^2\|B^{\frac{1}{2}}\|^2\left(\|B^{\frac{1}{2}}x_1  +B^{-\frac{1}{2}}x_2\|^2
+\|B^{-\frac{1}{2}}y_1  - B^{\frac{1}{2}}y_2 \|^2
\right)\\
& \leq & 
2\|\Gamma\|^2 a^{-2} m_2
\end{eqnarray*}

and 

\begin{eqnarray*}
a_4
& = & 
\|y_1\|^2+\|y_2\|^2\\
& \leq & 
\|y_1\|^2+\|B^{-1}\|^2\|By_2\|^2\leq \|B^{-1}\|^2(\|y_1\|^2+\|By_2\|^2)\\
& = & 
\frac{1}{2}\|B^{-1}\|^2(\|y_1+By_2\|^2+\|y_1-By_2\|^2)\\
& = & 
\frac{1}{2}\|B^{-1}\|^2(\|B^{\frac{1}{2}}(B^{-\frac{1}{2}}y_1 + B^{\frac{1}{2}}y_2)\|^2+\|B^{\frac{1}{2}}(B^{-\frac{1}{2}}y_1-B^{\frac{1}{2}}y_2)\|^2)\\
& \leq & 
\frac{1}{2}\|B^{-1}\|^2\|B^{\frac{1}{2}}\|^2(\|B^{-\frac{1}{2}}y_1+B^{\frac{1}{2}}y_2\|^2+\|B^{-\frac{1}{2}}y_1-B^{\frac{1}{2}}y_2\|^2)\\
& \leq & \frac{1}{2}\|B^{-1}\|^2a^{-2}m_4.
\end{eqnarray*}
\end{proof}

\begin{theorem}\label{th_vtonos}
The algebra $\cl A$  satisfies property (V') if and only if the angle between $N$ and $M$ is positive.
\end{theorem}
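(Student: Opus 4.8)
The plan is to prove the two implications separately, starting from the observation that, because $\Lat\cl A=\{0,P,Q,I\}$, the four choices $L=0,P,Q,I$ turn the left-hand side of (\ref{eq_el2}) into $a_1,a_2,a_3,a_4$ respectively; thus
$$\inf\mbox{}_{L\in\Lat\cl A}\left(\|L^{\perp}x\|^2+\|Ly\|^2\right)=\min\{a_1,a_2,a_3,a_4\},$$
and property (V') becomes the assertion that $\min\{a_1,a_2,a_3,a_4\}\le c\,\|\omega_{x,y}|_{\cl A}\|$ for a constant $c$ independent of $x=(x_1,x_2)$ and $y=(y_1,y_2)$.

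For the forward implication I would assume the angle positive, so that $B$ is invertible and Proposition \ref{p_sim} gives $\cl A=S\cl B S^{-1}$ with $S$ invertible. The one new ingredient is a comparison of functional norms across this similarity: writing $T=SCS^{-1}$ one has $(Tx,y)=(CS^{-1}x,S^*y)=\omega_{S^{-1}x,S^*y}(C)$, and since $\|C\|\le1$ forces $\|SCS^{-1}\|\le\|S\|\,\|S^{-1}\|$, rescaling $C$ yields
$$\|\omega_{S^{-1}x,S^*y}|_{\cl B}\|\le\|S\|\,\|S^{-1}\|\,\|\omega_{x,y}|_{\cl A}\|.$$
Chaining this with Lemma \ref{lemmathem}, which gives $\min\{a_1,a_2,a_3,a_4\}\le c\min\{m_1,m_2,m_3,m_4\}$, and Lemma \ref{propothem}, which gives $\min\{m_1,m_2,m_3,m_4\}\le\|\omega_{S^{-1}x,S^*y}|_{\cl B}\|$, I obtain
$$\min\{a_1,a_2,a_3,a_4\}\le c\,\|S\|\,\|S^{-1}\|\,\|\omega_{x,y}|_{\cl A}\|,$$
which is (V') with constant $c\|S\|\|S^{-1}\|$. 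It is worth noting that the finiteness of the constant $c$ of Lemma \ref{lemmathem}, which involves $\|B^{-1}\|$, is exactly where invertibility of $B$ enters.

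For the converse I would argue by contraposition, assuming the angle is zero, i.e. $B$ is not invertible, and producing vectors that defeat every scaled inequality. Since $B\ge0$ is injective but not bounded below, $0$ lies in its approximate point spectrum, so I can choose unit vectors $e_n\in H_0$ with $\|Be_n\|\to0$; I also fix a unit vector $f\in H_0$, for which $Bf\ne0$ as $\ker B=\{0\}$, and set $x_n=(e_n,0)$ and $y=(0,f)$. A direct computation from the invariance conditions defining $\cl A$ shows that every $T=(T_{ij})\in\cl A$ (in the $H_0\oplus H_0$ decomposition) satisfies $T_{21}=BT_{12}B$, whence for $T\in\ball(\cl A)$
$$|(Tx_n,y)|=|(T_{21}e_n,f)|=|(T_{12}Be_n,Bf)|\le\|Be_n\|\,\|Bf\|;$$
consequently $\|\omega_{x_n,y}|_{\cl A}\|\le\|Be_n\|\,\|Bf\|\to0$. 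At the same time $a_1=a_4=1$, while $a_2\ge\|Py\|^2$ and $a_3\ge\|Qy\|^2$, both strictly positive and independent of $n$ because $Bf\ne0$; hence $\min\{a_1,a_2,a_3,a_4\}$ stays bounded below by a fixed $\delta>0$ and its ratio to $\|\omega_{x_n,y}|_{\cl A}\|$ tends to infinity, contradicting (V').

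I expect the forward direction to be routine given the two lemmas, the only substance being the similarity estimate. The part demanding an idea is the converse: one must localize $x_n$ near the bottom of the spectrum of $B$, so that the two factors of $B$ in $T_{21}=BT_{12}B$ collapse the functional norm, while simultaneously keeping $y$ away from $\ker B$ so that $\|Py\|$ and $\|Qy\|$, and with them the distance, remain bounded below. The main point to check is that these two demands are compatible, which is precisely what the asymmetric choice $x_n=(e_n,0)$, $y=(0,f)$ secures.
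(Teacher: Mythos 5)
Your proof is correct, and its overall architecture matches the paper's: the forward direction is identical (chain Lemmas \ref{propothem} and \ref{lemmathem} with the similarity estimate $\|\omega_{S^{-1}x,S^*y}|_{\cl B}\| \leq \|S\|\|S^{-1}\|\,\|\omega_{x,y}|_{\cl A}\|$ obtained from Proposition \ref{p_sim}), and the converse rests on the same key mechanism, namely the two factors of $B$ in the $(2,1)$ corner of operators in $\cl A$, which collapse $\|\omega_{x,y}|_{\cl A}\|$ when $x=(x_1,0)$, $y=(0,y_2)$ and $\|Bx_1\|$ is small. Where you genuinely diverge is in the bookkeeping of the converse, and your version is cleaner: the paper builds spectral projections $E_n = E([\lambda_n,\lambda_{n-1}))$ of $B$, takes $x_1=e_{n+1}$, $y_2=e_1$ supported on them, bounds the distance term below by $\tfrac14\|By_2\|^2$ via operator-norm estimates on $\Gamma$, and reaches the contradiction $\tfrac14\lambda_1^2 \leq c\,\lambda_0\lambda_n \to 0$; you instead take any approximate null sequence of unit vectors ($\|Be_n\|\to 0$ exists because $B\geq 0$ is self-adjoint, injective and not invertible, so $0$ lies in its approximate point spectrum) together with a fixed unit vector $f$, and exploit the fact that $y=(0,f)$ does not move: the left-hand side is then bounded below by the constant $\min\{1,\|Py\|^2,\|Qy\|^2\}>0$, while the right-hand side tends to $0$. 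This avoids both the spectral-measure construction and the $\Gamma$-estimates; the only thing you must check (and which does hold) is that $Py\neq 0$ and $Qy\neq 0$, which follows since $Py=\Gamma(Bf,B^2f)$, $Qy=\Gamma(-Bf,B^2f)$, $Bf\neq 0$ and $\Gamma$ is injective. Your derivation of the corner relation $T_{21}=BT_{12}B$ by direct computation from invariance of $N$ and $M$ (add and subtract the two invariance identities) is also valid; the paper simply cites \cite{akl} for this structure.
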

\begin{proof}
Suppose that the angle between $N$ and $M$ is positive.
Let $x,y\in H$. 
By Lemmas \ref{propothem} and \ref{lemmathem}, there exists $c \geq 0$
such that 
$$\inf\mbox{}_{L\in \Lat\cl A} (\|L^{\perp}x\|^2 + \|Ly\|^2) 
\leq c \|\omega_{S^{-1}x, S^*y}|_{\cl B}\|.$$
On the other hand, 
letting $t = \|S\|\|S^{-1}\|$ we have by Proposition \ref{p_sim} that 
\begin{eqnarray*}
\|\omega_{S^{-1}x, S^*y}|_{\cl B}\|
& = & 
\sup_{B \in \cl B, \|B\|\leq 1} |\omega_{x, y}(SBS^{-1})|\\
& \leq & 
\sup_{A \in \cl A, \|A\|\leq t} |\omega_{x, y}(A)| = t \|\omega_{x, y}|_{\cl A}\|.
\end{eqnarray*}
It follows that $\cl A$ satisfies (V'). 

Assume that the angle between $N$ and $M$ is zero. 
Let $x_1, y_2\in H_0$ be unit vectors and
$x = (x_1, 0)$ and $y = (0, y_2)$ with respect to the decomposition $H = H_0\oplus H_0$.
We have
\begin{eqnarray*}
\|P^{\perp}x\|^2+ \|Py\|^2 
& = & 
\left\| 
\Gamma \left[ \begin{matrix}
B^2 &  -B \\
 -B &  I \\
\end{matrix} \right]\left[ \begin{matrix}
x_1  \\
 0 \\
\end{matrix} \right]\right\|^2
+ 
\left\|
\Gamma 
\left[ \begin{matrix}
I &  B \\
 B &  B^2 \\
\end{matrix} \right]
 \left[ \begin{matrix}
0  \\
 y_2 \\
\end{matrix}\right]\right\|^2\\
\end{eqnarray*}
\begin{eqnarray*}
& = & 
\left\| 
\Gamma  \left[ \begin{matrix}
B^2x_1  \\
 -Bx_1 \\
\end{matrix} \right]\right\|^2
+ 
\left\|
\Gamma  \left[ \begin{matrix}
By_2 \\
 B^2y_2 \\
\end{matrix}\right]\right\|^2.
\end{eqnarray*}

Since 
$$\left\|\left[ \begin{matrix}
(I+B^2) &  0 \\
 0 &  (I+B^2) \\\end{matrix} \right]\right \|^{-1}\geq 1/2$$
and, for every invertible operator $X $ and every vector $\xi$ we have 
$\|X\xi \|\geq \|X^{-1}\|^{-1}\|\xi\|$, 
we obtain
$$\|P^{\perp}x\|^2+ \|Py\|^2\geq 
\frac{1}{4}\left(\left\| \left[ \begin{matrix}
B^2x_1  \\
 -Bx_1 \\
\end{matrix} \right]\right\|^2+ \left\|
 \left[ \begin{matrix}
By_2 \\
 B^2y_2 \end{matrix}\right]\right\|^2\right)\geq \frac{1}{4}\|By_2\|^2.$$

A similar calculation gives
$$\|Q^{\perp}x\|^2+ \|Qy\|^2\geq 
\frac{1}{4}\left(\left\| \left[ \begin{matrix}
B^2x_1  \\
 Bx_1 \\
\end{matrix} \right]\right\|^2+ \left\|
 \left[ \begin{matrix}
-By_2 \\
 B^2y_2 \\
\end{matrix}\right]\right\|^2\right)\geq \frac{1}{4}\|By_2\|^2.$$
Hence
\begin{equation}\label{eq_B2}
\inf_{L\in \Lat\cl A} \left(\|L^{\perp} x\|^2 + \|Ly\|^2\right) \geq  \frac{1}{4}\|By_2\|^2.
\end{equation}

We calculate the right hand side of inequality (\ref{eq_el2}).
Let $T \in \ball(\cl A)$. We may write
$$T=
\left[ \begin{matrix}
C &  D \\
 BDB &  R \\
\end{matrix} \right]$$
for some $C, D, R  \in \cl B(H_0)$ such that $BC = RB$ (see {\it e.g.} \cite{akl}).
We have
$\omega_{x, y}(T)=(BDBx_1, y_2)$
and, since $D$ is a contraction, 
$$|\omega_{x,y}(T)|\leq \|Bx_1\|\|By_2\|.$$

Assume now that there exists a constant $c$ such that  
\begin{equation*}
\inf_{L\in \Lat\cl A} \left(\|L^{\perp} x\|^2 + \|Ly\|^2\right) \leq c \|\omega_{x,y}|_{\cl A}\|.
\end{equation*}
Then, by (\ref{eq_B2}),
\begin{equation}\label{eq_bb}
 \frac{1}{4}\|By_2\|^2  \leq c  \|Bx_1\|\|By_2\|.
\end{equation}

By assumption, $B$ is injective and not invertible. Denoting by $E(\cdot)$ its spectral measure, 
we can hence find a decreasing sequence $(\lambda_n)_{n=0}^{\infty}\subseteq (0,1]$ such that 
$\lim_{n\to\infty} \lambda_n = 0$ and the projections 
$E_n = E([\lambda_n,\lambda_{n-1}))$ are non-zero for all $n\geq 1$. 
For each $n\geq 1$, let $e_n$ be a unit vector with $E_n e_n = e_n$. 
Taking
$x_1 = e_{n+1}$ and $y_2 = e_1$ in (\ref{eq_bb}), we obtain
\begin{equation*}
\frac{1}{4}\lambda_1^2\leq  c \lambda_0 \lambda_n.
\end{equation*}
Letting $n \rightarrow \infty$ we obtain a contradiction with the fact that
$\lim_{n \rightarrow \infty} \lambda_n = 0$.
\end{proof}

\begin{remark}
{\rm Theorem \ref{th_vtonos} shows that if $H_0$ is finite dimensional then 
$\cl A$ automatically has property (V').}
\end{remark}

\end{document}